\newtheorem{thm}{Theorem}[section]
\newtheorem{lem}[thm]{Lemma}
\newtheorem{prop}[thm]{Proposition}
\newtheorem{cor}[thm]{Corollary}
\theoremstyle{definition}
\newtheorem{defn}[thm]{Definition}
\newtheorem{rmk}[thm]{Remark}
\theoremstyle{remark}
\numberwithin{equation}{section}
\newcommand{\alb}{\op{Alb}}
\newcommand{\op}{\mathrm}
\newcommand{\mf}{\mathcal{F}}
\newcommand{\mo}{\mathcal{O}}
\newcommand{\Z}{\mathbb{Z}}
\newcommand{\B}{\mathbb{B}}
\newcommand{\C}{\mathbb{C}}
\newcommand{\aut}{\mathrm{Aut}}
\newcommand{\can}{\mathrm{can}}
\newcommand{\diff}{\mathrm{Diff}}
\newcommand{\fix}{\mathrm{Fix}}
\newcommand{\id}{\mathrm{id}}
\newcommand{\sing}{\mathrm{sing}}
\newcommand{\restr}[1]{{\raisebox{-0.0\height}{$\mid_{#1}$}}}
\begin{document}
\title{{Surfaces of general type with $\MakeLowercase{q}=2$ are rigidified}}
\author{Wenfei Liu}

\dedicatory{Dedicated to Professor Fabrizio Catanese on the occasion of his 65th birthday}
\address{Wenfei Liu \\School of Mathematical Sciences\\ Xiamen University\\Siming South Road 422\\361005 Xiamen\\ Fujian\\ P. R. China}
\email{wliu@xmu.edu.cn}
\thanks{Several ideas used here emerge during the many helpful discussions with Jin-Xing Cai.  My research benefited a lot from the support of Leibniz Universit\"at Hannover and of Peking University while I was working in these universities. I was partially supported by NSFC (No.~11425101, No.~11471020) and by the Recruitment Program for Young Professionals. The author is grateful to Yi Liu for answering a question in topology.}
\subjclass[2000]{Primary 14J50; Secondary 14J29}
\date{}
\keywords{}

\begin{abstract}
Let $S$ be a minimal smooth projective surface of general type with irregularity $q=2$. We show that, if $S$ has a nontrivial holomorphic automorphism acting trivially on the cohomology with rational coefficients, then it is a surface isogenous to a product. As a consequence of this geometric characterization, one infers that no nontrivial automorphism of surfaces of general type with $q=2$ (which are not necessarily minimal) can be homotopic to the identity. In particular, such surfaces are rigidified in the sense of Fabrizio Catanese.\end{abstract}
\maketitle

\section*{Introduction}
It is an interesting phenomenon in algebraic and complex geometry that sometimes topology determines geometry. For example, in the study of (biholomorphic) automorphisms of compact complex manifolds, one can ask when two homotopic automorphisms are in fact identical. Obviously, the answer is no if the automorphism group has a positive dimension; algebraic curves of genus at most 1 are such examples. On the other hand, when the manifold can be endowed with a K\"ahler metric with nonpositive sectional curvature, one obtains the rigidity of automorphisms by resorting to the uniqueness of harmonic maps within a homotopy class into such manifolds (see Theorem~\ref{thm: harmonic}). Algebraic curves of genus at least 2 and the products thereof fall into this category.

In general, a compact complex manifold neither has a positive dimensional automorphism group nor admits a K\"ahler metric with nonpositive curvature. So other methods are needed to investigate the rigidity of automorphisms. 

As is usual in algebraic topology, one can consider the induced action of an automorphism on the cohomolgy.  An automorphism of a compact complex manifold is  \emph{numerically trivial} if it acts trivially on the cohomology groups with rational coefficients (or with complex coefficients); it is called \emph{cohomologically trivial} if it acts trivially on the cohomology groups with $\Z$-coefficients. Note that, in comparing two automorphisms, we easily reduce to the case where one of the automorphism is the identity map. It is evident that automorphisms homotopic to the identity is cohomologically trivial, and the cohomologically trivial automorphisms are numerically trivial. 

Apart from giving a topological characterization of the identity map, a faithful action of the automorphisms on the cohomology groups is useful in constructing fine moduli spaces of algebraic varieties (see \cite{Pop77}).

It is well-known that there are no numerically trivial automorphisms on curves of genus $\geq 2$, while for curves of genus $\leq 1$ the numerically trivial automorphisms are exactly those lying in the identity component of the automorphism group.  Much attention has been paid to the automorphisms of K3 surfaces and their analogues (\cite{BR75,MN84, Muk10,Bea83, BNS11, Ogu12, MW17}). Numerically trivial automorphisms of elliptic surfaces and of surfaces of general type have been investigated  by Peters (\cite{Pet79, Pet80}) and by Cai and his coauthors (\cite{Cai04, Cai06a, Cai06b, Cai07, Cai09, Cai10, Cai12a, Cai12b, CLZ13, CL13}). There are also some related work done on other classes of projective varieties such as cyclic covers of the projective spaces and complete intersections in projective spaces (\cite{JL15, CPY5, Pan15, LP17}).

On surfaces of general type there are no numerically trivial automorphisms if the irregularity of the surface is $q\geq 3$; on the other hand, if the irregularity is two or less then there indeed exist unbounded series of surfaces, all isogenous to a product of curves, which have numerically trivial automorphisms (\cite{CLZ13, CL13}). We prove in this paper that, if the irregularity is two, these examples are the only ones:
\begin{thm}\label{thm: main}
Let $S$ be a minimal smooth projective surface of general type with $q(S)=2$. If $S$ has a non-trivial automorphism that is numerically trivial, then $S$ is a surface isogenous to a product, of unmixed type.
\end{thm}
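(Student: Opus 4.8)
The plan is to reduce to an automorphism $\sigma$ of prime order $p$ (any nontrivial power of $\sigma$ of prime order is again cohomologically trivial) and to exploit the Albanese map $a\co S\ra A:=\alb(S)$, which lands in an abelian surface since $q=2$. Because $\sigma$ acts trivially on $H^1(S,\Q)$, functoriality of the Albanese makes $a$ equivariant for an automorphism $\sigma_A$ of $A$ that is trivial on $H^1(A,\Q)$; writing $\sigma_A=t_x\circ h$ with $h$ fixing the origin, triviality on $H^1$ forces $h=\id$, so $\sigma_A=t_x$ is a translation, with $px=0$. The first key point I would establish is that $x=0$. By the topological Lefschetz fixed point formula $\sum_i(-1)^i\op{tr}(\sigma^*\mid H^i)=e(S)$, and $e(S)=c_2(S)>0$ for a minimal surface of general type, so $\fix(\sigma)\neq\emptyset$; but any point fixed by $\sigma$ maps to a point fixed by $t_x$, which forces $x=0$. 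Hence $a\circ\sigma=a$, i.e. $\sigma$ preserves every fibre of the Albanese map and fixes the Albanese image pointwise.

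I would then split according to $\dim a(S)$. If $\dim a(S)=1$, the Albanese image is a smooth genus-two curve $B$ generating $A$, and composition gives a candidate fibration $f\co S\ra B$ on which $\sigma$ acts fibrewise, with general fibre of genus $\geq 2$ (a surface of general type has no fibration of fibre genus $\leq 1$); one then seeks a transverse second pencil $S\ra\PP^1$ to realise the unmixed type with $g(C_1/G)=2$, $g(C_2/G)=0$. If $\dim a(S)=2$, then $a$ is generically finite and $\sigma$ behaves like a deck transformation over $A$; here the goal is a splitting $A\cong E_1\times E_2$ whose two projections induce fibrations $S\ra E_i$ with fibres of genus $\geq 2$. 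In both cases the common tool is the holomorphic Lefschetz fixed point formula, whose left-hand side equals $\sum_q(-1)^q h^q(\mathcal{O}_S)=\chi(\mathcal{O}_S)=p_g-1$ since $\sigma$ acts trivially on each Hodge summand $H^q(S,\Omega^p_S)$; this pins down the number and type of fixed components (isolated points versus smooth curves, with their genera and normal-bundle rotation numbers).

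The decisive structural input I would extract is that the relevant fibration(s) have \emph{smooth} fibres and constant moduli. Concretely I would feed the fixed-point data into the additivity identity $e(S)=e(F)e(B)+\sum_t\bigl(e(F_t)-e(F)\bigr)$, in which every correction term is $\geq 0$, to force $e(S)=e(F)e(B)$, so that all fibres are smooth and the fibration is an isotrivial fibre bundle; the rigidity coming from $\sigma$ preserving all of $H^2(S,\Q)$, hence every divisor class and in particular every fixed curve and fibre class, is what should exclude degenerate and multiple fibres. Two transverse smooth isotrivial fibrations over curves then exhibit an unramified Galois cover $C_1\times C_2\ra S$ with $G$ acting diagonally and freely, i.e. $S$ isogenous to a product of unmixed type. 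In the case $\dim a(S)=2$ the same analysis must in addition produce the splitting $A\cong E_1\times E_2$, by extracting two complementary elliptic curves from the images of the fixed locus.

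The main obstacle I anticipate is exactly this last step: passing from the numerical and fixed-point constraints on $\sigma$ to the geometric statements that fibres are smooth and moduli are constant, and that $A$ splits. Cohomological triviality gives strong but indirect control, and the delicate part is to rule out isolated fixed points that would force degenerate fibres, to show that the induced action on the base is trivial while the fibrewise action remains nontrivial of order $p$, and to organise the centraliser of $\sigma$ inside $\aut(S)$ so that a genuine free diagonal $G$-action on a product $C_1\times C_2$ is recovered. I expect the prime-order reduction and the equivariant Lefschetz bookkeeping to be routine, with the isotriviality-and-splitting argument carrying the real weight of the proof.
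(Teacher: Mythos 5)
Your preliminary reductions (passing to prime order, showing the induced automorphism of $A=\alb(S)$ is a translation and then trivial via the topological Lefschetz formula) are correct and standard; in fact \cite{CLZ13} already gives much more: the group $\au$ has order exactly $2$, its generator $\sigma$ has $\fix(\sigma)$ equal to $4\chi(S,\mathcal O_S)$ \emph{isolated} points, $K_S^2=8\chi(S,\mathcal O_S)$, and $S$ has maximal Albanese dimension (so your case $\dim a(S)=1$ does not occur). But both steps that carry the real weight of your plan fail. First, the Euler-number step: you cannot force $e(S)=e(F)e(B)$, and doing so would contradict the very theorem you are proving. Indeed $e(S)=12\chi-K_S^2=4\chi>0$, while the fibration that eventually appears here maps onto an \emph{elliptic} curve, so $e(F)e(B)=0$; the discrepancy $4\chi$ is exactly accounted for by multiple fibres of the form $2C$ with $C$ smooth, which genuinely occur on unmixed surfaces isogenous to a product with $q=2$ (the coverings $C_i\ra C_i/G$ are branched since $g(C_i/G)=1$). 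So ``all fibres smooth, hence a fibre bundle'' is too strong; the correct closing tool, used in the paper, is Serrano's lemma \cite[Lemma~5]{S95}: a fibration whose singular fibres are all of the form $2C$ with $C$ smooth, together with $K_S^2=8\chi(S,\mathcal O_S)$, already implies that $S$ is isogenous to a product of unmixed type.

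Second, and more decisively, you have no mechanism for the step you yourself flag as the main obstacle: producing the elliptic curves (equivalently the two pencils on $A$). Extracting them ``from the images of the fixed locus'' cannot work, because $\fix(\sigma)$ contains no curves at all --- only the $4\chi$ isolated points. The paper's mechanism is of a completely different nature and is the actual content of the proof: form the minimal resolution $X$ of the nodal quotient $S/\sigma$, compute $K_X^2=4\chi(X,\mathcal O_X)$ (half of $K_S^2=8\chi$), and invoke the recent Severi-line theorem of Lu--Zuo \cite{LZ15} and Barja--Pardini--Stoppino \cite{BPS15}: equality $K^2=4\chi$ for a minimal surface of maximal Albanese dimension forces the canonical model of $X$ to be a \emph{flat double cover} of $A$ branched along an ample curve $D$ with simple singularities. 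A genus count against the even set of $(-2)$-curves coming from the $4\chi$ nodes then shows $D$ is a simple normal crossing union of \emph{smooth elliptic curves}; these are fibres of elliptic pencils $h_i\co A\ra E_i$, and a $\pi_1$-argument (via Nori's lemma) lifts the Galois involution of $S/\sigma\ra A$ to $S$, exhibiting $a_S$ as a $(\Z/2)^2$-cover whose two smooth branch divisors feed into Serrano's lemma. Lefschetz fixed-point bookkeeping alone, however carefully organized, does not produce structure of this strength; without the Severi-line input (or a substitute for it) the proposal cannot be completed.
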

Recall that a surface $S$ of general type is isogenous to a product of curves if it admits a product of two smooth curves, say $C\times D$, as an \'etale cover. Indeed, one can assume that the covering $C\times D\rightarrow S$ is Galois, and $S$ is said to be of \emph{unmixed type} if the Galois group does not interchange the two factors of $C\times D$. For the basic properties of such surfaces we refer to the seminal paper \cite{Cat00}, see also \cite[Section~4]{CLZ13}. Theorem \ref{thm: main} is parallel to a result of \cite{CL13}, which says that minimal surfaces of general type with $q=1$ and with a maximal possible automorphism group (of order 4) acting trivially on cohomology are isogenous to a product of two curves.

A few words about the proof of Theorem~\ref{thm: main}. Using the characterization of the surfaces of maximal Albanese dimension on the Severi line (\cite{LY5, BPS15}), we manage to prove that the Albnanese map $a_S\colon S\rightarrow \alb(S)$ is a (flat) bidouble cover, i.e., a Galois branched covering with Galois group $(\Z/2\Z)^2$ (see \cite{Cat84}). The components of the branch curve of $a_S$ are elliptic curves, giving an isogeny of $\alb(S)$ with a product of elliptic curves. This in turn induces a fibration on $S$ whose singular fibres are of the form $2C$ with $C$ smooth. The fibration structure together with the numerical equality $K_S^2=8\chi(\mo_S)$, obtained in \cite{CLZ13}, is enough to conclude that the surface $S$ is isogenous to a product (see \cite{Ser95}).

A surface isogenous to a product has a K\"ahler metric with nonpositive curvature, induced from the product of curves covering it. Using Theorems~\ref{thm: main} and \ref{thm: harmonic}, one proves
\begin{cor}\label{cor: rigid}
Let $S$ be a surface of general type with $q(S)=2$. Then $S$ has no nontrivial automorphism that is homotopic to the identity. In particular, $S$ is rigidified, that is, there is no nontrivial automorphism of  $S$ lying in the identity component $\diff^0(S)$ of the diffeomorphism group.
\end{cor}
Fabrizio Catanese (\cite{Cat13, Cat15}) asked if smooth projective varieties of general type are rigidified. The above corollary answers his question in the positive in the case of surfaces of general type with $q(S)=2$. In general, a positive answer would be useful in establishing a desired local homeomorphism between the Teichm\"uller space and the Kuranishi space at the given complex structure of the manifold.

\medskip

\noindent{\bf Notation and Conventions.} We work over the complex numbers $\C$.

Let $Y$ be a smooth projective variety of dimension $n$. Then 
\begin{itemize}[leftmargin=*]
 \item for a sheaf $\mf$ on $Y$, $h^i(Y,\mf)$ is the dimension of its $i$-th cohomology group $H^i(Y,\mf)$ and $\chi(\mf)$ the Euler characteristic;
 \item $q(Y):=h^1(Y,\mo_Y)$ and $p_g(Y):=h^0(Y, K_Y)$ are the irregularity and the geometric genus of $Y$ respectively;
 \item $e(Y)$ is the topological Euler characteristic;
 \item the Albanese variety of $Y$ is denoted by $\alb(Y)$ and the Albanese map by $a_Y\colon Y\rightarrow\alb(Y)$;
 \item  the full group of biholomorphic automorphisms will be denoted by $\aut(Y)$ and the group of automorphisms acting trivially on the cohomology ring $H^*(Y,\C)$ will be denoted by $\aut_0(Y)$. 
 \end{itemize}

For a finite group $G$ we will denote its order by $|G|$. If it acts on a set $Y$ then $\fix(\sigma):=\{y\in Y\, \mid\, \sigma(y)=y\}$ denotes the fixed point set of an element $\sigma\in G$.

\section{Preliminaries}
\label{sec: pre}
Let $Y$ be a smooth projective variety and $G\subset\aut(Y)$ a finite group of automorphisms inducing trivial action on the cohomology. We recall several basic properties concerning the quotient map $\pi\colon Y\rightarrow Y/G$ (\cite[Section~1]{CL13}).

\begin{lem}\label{lem: basic}
Let $Y$ be a smooth projective variety and $G$ a finite group of automorphisms acting trivially on $H^*(Y,\C)$. \begin{enumerate}
\item Let $X\rightarrow Y/G$ be a resolution of singularities. Then $h^i(X, \mo_X) = h^i(Y,\mo_Y)$ for any $0\leq i \leq \dim Y$. As a consequence, $$ q(X)=q(Y),\, p_g(X) =p_g(Y) \text{ and }\chi(\mo_X)=\chi(\mo_Y).$$
\item If the topological Euler characteristic $e(Y)\neq 0$, then the Albanese map of $Y$ factors as
 \[
a_Y\colon Y\xrightarrow{\pi} Y/G\rightarrow \alb(Y)
  \]
where $\pi\colon Y\rightarrow Y/G$ is the quotient map.
\end{enumerate}
\end{lem}

By the universality of the Albanese maps and Lemma~\ref{lem: basic} (ii) we know that the Albanese varieties $\alb(X)$ and $\alb(Y)$ can be identified after fixing suitable base points for the Albanese maps. Indeed, we have a commutative diagram 
\begin{equation}\label{diag: alb1}
 \begin{aligned}
 \xymatrix{
   Y\ar[r]^\pi & Y/G\ar[r]\ar@{=}[d] &\alb(Y)\ar@{=}[d]\\
  X  \ar[r] & Y/G\ar[r] &\alb(X).
   }
    \end{aligned}
   \end{equation}
\noindent{\bf Convention.} The two identified Albanese varieties $\alb(X)$ and $\alb(Y)$ will be denoted by $A$ if no confusion arises.

We need a result from geometric analysis, namely, the uniqueness of harmonic maps into Riemannian manifolds with nonpositive sectional curvature. 
\begin{thm}\label{thm: harmonic}
Let $(M, g)$ and $(N,h)$ be compact Riemannian manifolds, where $g$ and $h$ denote the Riemannian metrics. Suppose that the sectional curvature of $(N, h)$ is nonpositive. If $\phi_0$ and $\phi_1$ are homotopic harmonic maps from $(M,g)$ to $(N,h)$ such that $\phi_0(p)=\phi_1(p)$ for some $p\in M$, then $\phi_0=\phi_1$.
\end{thm}
\begin{proof}
This is a direct consequence of \cite[(G)]{Har67}, see also \cite[(5.5)]{EL78}.
\end{proof}
We refer to \cite{ES64} and \cite{EL78} for the notions appearing in Theorem~\ref{thm: harmonic}. For maps between K\"ahler manifolds one has the implications holomorphic $\Rightarrow$  harmonic $\Rightarrow$ real-analytic \cite[pages 117--118]{ES64}. 

\section{The Albanese maps}
We begin by recalling the following fact.
 \begin{thm}[\cite{CLZ13}]\label{thm: q geq 2}
Let $S$ be a minimal smooth projective surface of general type with $q(S)=2$ such that $\aut_0(S)$ is nontrivial. Then
\begin{enumerate}
\item $S$ has maximal Albanese dimension;
\item $K_S^2= 8\chi(\mathcal O_S)$;
\item $\aut_0(S)$ has order $2$, say, generated by $\sigma$; the fixed locus $\fix(\sigma)$ consists of exactly $4\chi(\mathcal O_S)$ points.
\end{enumerate}
\end{thm}

\begin{prop}\label{prop: X}
Let $S$ be a minimal smooth projective surface of general type with $q(S)=2$. Assume that $\aut_0(S)$ is nontrivial, and let $\sigma$ be its generating involution. Let $X$ be the minimal resolution of $S/\sigma$. Then the following holds.
\begin{enumerate}
\item $X$ is a minimal surface of general type with $\chi(\mathcal O_X) = \chi(\mathcal O_S)$, $q(X)=2$ and $K_X^2=4\chi(\mo_X)$. 
\item  $X$ is of maximal Albanese dimension.
\end{enumerate}
\end{prop}
\begin{proof}
(i) By Theorem~\ref{thm: q geq 2} (iii) the quotient surface $S/\sigma$ has exactly $4\chi(\mathcal O_S)$ singularities, all of which are ordinary nodes. Let $\pi: S\rightarrow S/\sigma$ be the quotient map.  Then we have $\lambda^*K_{S/\sigma} = K_S$ which is big and nef. It follows that the minimal resolution $X$ of singularities of $S/\sigma$ is a minimal surface of general type, and
\begin{equation}\label{eq: K^2}
K_S^2= 2 K_{S/\sigma}^2 = 2 K_X^2.
\end{equation}
By Lemma~\ref{lem: basic} (i) we have $\chi(\mathcal O_X) = \chi(\mathcal O_S)$ and $q(X)=q(S)=2$. One infers by Theorem~\ref{thm: q geq 2} (ii) that $K_X^2=4\chi(\mathcal O_X)$.

(ii) The surjectivity of the Albanese map $a_X\colon X\rightarrow \alb(X)$ follows from Theorem~\ref{thm: q geq 2} (i) and the diagram \eqref{diag: alb1} with $Y=S$.
\end{proof}

\medskip

In the following the two identified Albanese varieties $\alb(X)$ and $\alb(S)$ will be denoted by $A$, see the convention introduced in Section~\ref{sec: pre}.

\begin{prop}\label{prop: severi}
Let $S$ be a minimal smooth projective surface of general type with $q(S)=2$. Assume that $\aut_0(S)$ is nontrivial, and let $\sigma$ be its generating involution. Then the morphism $a_{S/\sigma}\colon S/\sigma\rightarrow A$, induced by the Albanese map of $S$, is a flat double cover branched along a simple normal crossing ample curve $D$ whose irreducible components are elliptic curves. 
\end{prop}	
\begin{proof}

By Proposition~\ref{prop: X}, the invariants of $X$ lie on the Severi line $K^2=4\chi$. By \cite{BPS15, LY5} we infer that the Albanese map $a_X\colon X\rightarrow A$ is a generically finite map of degreee 2, and the branch locus is an ample curve, say $D$, with at most simple singularities. The \emph{flat} double cover of $A$ branched along $D$ is exactly the canonical model $X_\can$ of $X$, obtained by contracting all the \emph{$(-2)$}-curves on $X$. Thus it remains to show that $D$ is simple normal crossing and $S/\sigma=X_\can$.  We will achieve this by computing the arithmetic genus $p_a(D)$ in two different ways.

Standard computation for double covers yields $K_X^2  = \frac{D^2}{2}$. By the adjunction formula the arithmetic genus of $D$ is
\begin{equation}\label{eq: paB1}
p_a(D) =1 + \frac{D^2}{2} = 1 + K_X^2 = 1+4\chi(\mathcal O_X),
\end{equation}
where the last equality follows from Proposition~\ref{prop: X} (i).

We explain now another way to compute $p_a(D)$. Since $D$ has only simple singularities, the contraction of $(-2)$-curves $\mu\colon X\rightarrow X_\can$ is the canonical resolution of singularities of $X_\can$ as a double cover of $A$ (cf.~\cite[III.7]{BHPV}): we have the following commutative diagram
\begin{equation*}
\xymatrix{
X \ar[r]^\mu \ar[d]_{\tilde a} & X_\can \ar[d]_a\\
\tilde A\ar[r]^\rho & A
}
\end{equation*}
where $\rho\colon\tilde A\rightarrow A$ is the composition of blow-ups resolving successively the singularities of the branch curve and $\tilde a\colon X\rightarrow \tilde A$ is a double cover branched along the strict transform of $D$ possibly plus some ($-2$)-curves over the triple points of $D$. Remark that the $(-2)$-curves on $X$ are exactly the inverse images of the exceptional curves of $\rho$.

Let $\tilde D\subset \tilde A$ be the strict transform of $D$. Then $\tilde D$ is smooth and there is a relation between $p_a(D)$ and $p_a(\tilde D)$ (cf.~\cite[Cor.~V.3.7]{Har77}):
\begin{equation}\label{eq: pa}
p_a(D) = p_a(\tilde D) + \sum_{p\in D_\sing} \delta_p 
\end{equation}
where $D_\sing$ denotes the singular locus of $D$ and  $\delta_p$ is a positive integer, depending only on the type of the curve singularity $p\in D$.

\begin{defn}
A collection of distinct curves $E_1, \dots, E_k$ on a smooth projective surface is called \emph{even} if the sum $\sum_{1\leq i \leq k} E_i$ is linearly equivalent to $2L$ for some integral divisor $L$. 
\end{defn}

\begin{lem}\label{lem: delta_p}
Let $D$ be the branch curve of the Albanese map $a_X\colon X\rightarrow A$ and $p$ a singularity of $D$. Let $E_1, \dots, E_k\subset X$ be an even collection of \emph{disjoint} ($-2$)-curves. Then
\[
 \delta_p \geq \#\{E_i \mid E_i \text{ is contracted to } p\},
\]
and if the equality holds then $p\in D$ is of type $A_{2m+1}$ for some integer $m\geq 0$.
\end{lem}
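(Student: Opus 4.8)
The plan is to reduce the statement to a local computation at $p$ together with a small piece of $\mathbb{F}_2$-linear algebra on the resolution graph. Since $a\colon X_\can\to A$ is the double cover $w^2=f(x,y)$ with $\{f=0\}=D$, and the standard equations of rational double points are precisely $w^2+(\text{ADE plane curve})=0$, the point of $X_\can$ lying over $p$ is a rational double point of exactly the same ADE type as the plane curve singularity $p\in D$. Because $X$ is minimal, the map $X\to X_\can$ is the \emph{minimal} resolution, so the curves of $X$ contracted to $p$ are exactly the $(-2)$-curves $C_1,\dots,C_N$ forming the corresponding Dynkin configuration $\Gamma$ (with $N$ the ADE subscript). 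First I would set $T=\{\,i: C_i \text{ is one of the } E_j\,\}$, so that $\#\{E_j \text{ contracted to } p\}=\#T$ and, by disjointness of the $E_j$, $T$ is an independent set of vertices of $\Gamma$.

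Next I would transport the evenness hypothesis to the resolution germ. Let $U\to \mathrm{Spec}\,\mo_{X_\can,p}$ be the restriction of $X\to X_\can$ over a small neighbourhood. The $E_j$ not contracted to $p$ are disjoint from $U$, so restricting $\sum_j E_j\sim 2L$ to $U$ gives $\sum_{i\in T}C_i\sim 2(L\restr{U})$ in $\mathrm{Pic}(U)$. Since $p$ is a rational singularity, $\mathrm{Pic}(U)\cong H^2(U,\Z)\cong\Z^N$, the isomorphism being $M\mapsto (M\cdot C_j)_{j}$; hence $\sum_{i\in T}C_i$ is $2$-divisible if and only if $C_j\cdot\sum_{i\in T}C_i$ is even for every $j$. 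Reducing modulo $2$ and using $C_i^2=-2\equiv 0$ while the off-diagonal intersection numbers record the adjacencies of $\Gamma$, this says exactly that the indicator vector $\mathbf 1_T$ lies in the kernel over $\mathbb{F}_2$ of the adjacency matrix $A_\Gamma$. Thus $\#T$ is bounded by the maximal support size of a vector in $\ker_{\mathbb{F}_2}A_\Gamma$.

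It then remains to compare this maximal support with $\delta_p$ type by type, a finite check once one records $\delta_p$ for each ADE curve singularity (e.g.\ from $\delta_p=(\mu+r-1)/2$, with $\mu$ the Milnor number equal to the subscript and $r$ the number of branches). For a chain $A_n$ one finds $\ker_{\mathbb{F}_2}A_\Gamma$ is nonzero only when $n$ is odd, in which case it is spanned by the alternate vertices, of weight $(n+1)/2=\delta_p$; for $n$ even the kernel is trivial. For every $D_n$ and for $E_6,E_7,E_8$ a direct solution of $A_\Gamma t=0$ shows the maximal support is strictly smaller than $\delta_p$. This yields $\delta_p\geq\#T$ in all cases, with equality forcing type $A_{2m+1}$, as claimed; note also that every kernel vector turns out to have independent support, so the disjointness hypothesis is automatically compatible. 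The only real obstacle is the uniform comparison over the infinite families $A_n$ and $D_n$: one must check that the delta-invariant, computed from the \emph{curve} singularity, dominates the largest even independent configuration read off from the \emph{surface} resolution graph, the two being tied together precisely by the coincidence of ADE types for the double cover $w^2=f$.
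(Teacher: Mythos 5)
Your proof is correct, and in substance it follows the same strategy as the paper's: both arguments reduce the lemma to a finite, type-by-type ADE comparison between $\delta_p$ and the largest admissible configuration of disjoint exceptional $(-2)$-curves over $p$, where ``admissible'' means having even intersection number with every component of $a_X^{-1}(p)$ --- which is indeed the only consequence of the global hypothesis $\sum_i E_i\sim 2L$ that either argument uses (note your excursion through $\mathrm{Pic}(U)\cong H^2(U,\Z)$ is overkill for the direction you actually need, since $\bigl(\sum_i E_i\bigr)\cdot C_j=2L\cdot C_j$ is even for trivial reasons). The differences lie in the bookkeeping, and they are instructive. The paper computes $\delta_p$ from the canonical-resolution table (blowing up $A$ and tracking the strict transform of $D$, following Hartshorne) and identifies the admissible configurations by direct inspection of the dual graphs, whereas you compute $\delta_p$ from $(\mu+r-1)/2$ and organize the admissible configurations as the $\mathbb{F}_2$-kernel of the adjacency matrix of the Dynkin graph. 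Your systematic formulation turns out to be the more reliable one: the paper asserts that for $E_n$ singularities there is \emph{no} non-empty even collection of disjoint $(-2)$-curves over $p$, and this is false for $E_7$, whose adjacency matrix has a one-dimensional kernel over $\mathbb{F}_2$, spanned by the indicator vector of a three-element independent set (the short-arm vertex together with the two alternate vertices of the longest arm). The lemma is unaffected because $3<4=\delta_p$ for $E_7$, which is exactly what your weaker but correct claim (``maximal support strictly smaller than $\delta_p$'') records; so your route reaches the same conclusion while quietly repairing a small slip in the paper's case analysis. The one thing you should still write out explicitly is the kernel computation for the $D_n$ and $E_n$ graphs (and the verification that $\delta_p\geq 1$ rules out equality in the trivial-kernel cases), since that finite check is where the actual content of the lemma lives.
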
 
\begin{proof}
Let $C$ be the strict transform of $D$ on the blow-up of $A$ at $p$. Then, according to the type of $p\in D$, one can determine the types of singularities of the curve $C$ over $p$ and in turn the value $\delta_p$ as in the following table (cf.~\cite[Sec.~II.8]{BHPV} and \cite[Cor.~V.3.7]{Har77}):

{\renewcommand{\arraystretch}{1.2} 
\begin{center}
 \begin{tabular}{|c|c|c|c|c|c|c|}\hline
$p\in D$&  $A_n, n\geq 1$ & $D_n, n\geq 4$ & $E_6$ & $E_7$ & $E_8$\\ \hline
$q\in C$ over $p$ & $A_{n-2}$ & $A_{n-5}$ & $A_0$ & $A_1$ & $A_2$\\ \hline
$\delta_p(D)$  & $\lfloor \frac{n+1}{2}\rfloor$ & $1+\lfloor \frac{n}{2}\rfloor$ & 3 & $4$ & $4$\\ \hline
\end{tabular}
\end{center}
where points of type $A_{-1}$ and $A_0$ are meant to be smooth points. 
}

If  $p\in D$ is of type $A_{2m}$  or $E_n$ then there is no non-empty collection of disjoint ($-2$)-curves over $p$ whose sum has even intersection with each component of the exceptional locus $a_X^{-1}(p)$. It follows that in these cases $\#\{E_i \mid E_i \text{ is contracted to } p\}=0$.

If  $p\in D$ is of type $D_n$ then the only non-empty collection of disjoint ($-2$)-curves over $p$, whose sum has even intersection with each component of  $a_X^{-1}(p)$, consists of two end components. It follows that in this case $\#\{E_i \mid E_i \text{ is contracted to } p\}\leq 2$.

The lemma follows by comparing with the corresponding values of $\delta_p$ in the above table. 
\end{proof}

Let $\tilde S\rightarrow S$ be the blow-up at $\fix(\sigma)$. Then the induced morphism $\tilde S\rightarrow X$ is a double cover branched exactly along the exceptional curves $E_1,\dots,E_{4\chi}$ in $X$ over the singular points of $S/\sigma$, where $\chi=\chi(\mo_S)$. So they form an even collection of disjoint ($-2$)-curves, and by Lemma~\ref{lem: delta_p} we have 
\begin{equation}\label{eq: delta}
 \sum_{p\in D_\sing} \delta_p \geq\sum_{p\in D_\sing}\#\{E_i \mid E_i \text{ is contracted to } p\}= 4\chi(\mo_S),
\end{equation}
with equality if and only if  $\delta_p = \#\{E_i \mid E_i \text{ is contracted to } p\}$ for any $p\in D_\sing$.

Write $\tilde D=\cup_{1\leq i\leq k} \tilde D_i$ as the union of (smooth) irreducible components.  Since $\tilde D_i$ has a non-constant morphism to the abelian surface $A$ we infer that $g(\tilde D_i)\geq 1$. Combining \eqref{eq: pa} with \eqref{eq: delta} we can bound from below the arithmetic genus of $D$ as follows:
\begin{equation}\label{eq: paB2}
\begin{aligned}
p_a(D) &= p_a(\tilde D) + \sum_{p\in D_\sing}\delta_p\\
          &= -k + 1  + \sum_{1\leq i\leq k} g(\tilde D_i)+\sum_{p\in D_\sing}\delta_p\\
          &\geq -k + 1 + 4\chi(\mathcal O_S) + \sum_{1\leq i\leq k} g(\tilde D_i)\\
          &\geq 1 + 4\chi(\mathcal O_S) \hspace{2cm} \text{(since $g(\tilde D_i)\geq 1$)}.
\end{aligned}
\end{equation}

In view of \eqref{eq: paB1} the inequalities in \eqref{eq: paB2} are both equalities:
\begin{align}
&\delta_p = \#\{E_i \mid E_i \text{ is contracted to } p\}\text{ for any }p\in D_\sing,\label{Dsing}\\ 
&g(\tilde D_i) = 1 \text{ for all } 1\leq i\leq k.\label{eq: gDi}
\end{align}
By \eqref{Dsing} and Lemma~\ref{lem: pi1} the branch curve $D$ has at most $A_{2m+1}$-singularities. The irreducible components $D_i=\rho(\tilde D_i)$ has geometric genus 1 by \eqref{eq: gDi}. Since there are no singular elliptic curves on an abelian variety, the components $D_i$ are in fact smooth. Moreover, the singularities of a union of elliptic curves on an abelian surface are ordinary, hence $D$ has only $A_1$-singularities.

It is now easy to see that the only $(-2)$-curves on $X$ are the ones over the singular points of $S/\sigma$, and the quotient surface $S/\sigma$ is the canonical model of $X$.
\end{proof}

\begin{cor}\label{cor: finite}
Let $S$ be a minimal smooth projective surface of general type with $q(S)=2$. Assume that $\aut_0(S)$ is nontrivial. Then the Albanese map $a_S\colon S\rightarrow A$ is a finite morphism of degree $4$.
\end{cor}
\begin{proof}
Let $\sigma$ be the generating involution of $\aut_0(S)$.  Then the Albanese map $a_S$ is the composition of the quotient map $\pi\colon S\rightarrow S/\sigma$ and the induced map $a_{S/\sigma}\colon S/\sigma\rightarrow A$, both of which are finite of degree 2.
\end{proof}

Let $S$ be a minimal smooth projective surface of general type with $q(S)=2$. Assume that $\aut_0(S)$ is nontrivial, generated by an involution $\sigma$. The flat double cover $a_{S/\sigma}\colon S/\sigma\rightarrow A$ induces an involution $\bar\tau\colon S/\sigma\rightarrow S/\sigma$. The fixed locus $\fix(\bar\tau)$ is the ramfication curve of the double cover and contains all the singularities of $S/\sigma$. We want to lift $\bar\tau$ to $S$, so that the degree 4 finite morphism $a_S\colon S\rightarrow A$ will be recognized as a (flat) bidouble cover (see Proposition~\ref{prop: bidouble} below).

Some preparation is needed. Let $U_0$ be the smooth locus of $S/\sigma$, which is invariant under the action of $\bar\tau$.
\begin{lem}\label{lem: pi1}
Let $u\in U_0$ be a $\bar\tau$-fixed point. Then the induced automorphism of the fundamental group $\bar\tau_*\colon \pi_1(U_0, u)\rightarrow \pi_1(U_0,u)$ is the identity map.
\end{lem}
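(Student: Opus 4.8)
The plan is to compare $\pi_1(U_0,u)$ with the fundamental group of the Albanese surface $A$ through the double cover. By Proposition~\ref{prop: D} the flat double cover restricts to a branched double cover $\pi\co U_0\to B:=A\setminus D_\sing$ whose deck transformation is exactly $\bar\tau$, so $\pi\circ\bar\tau=\pi$. Since $\bar\tau(u)=u$, this gives $\pi_*\circ\bar\tau_*=\pi_*$ on $\pi_1(U_0,u)$. Removing the finite set $D_\sing$ from the abelian surface does not change the fundamental group, so $\pi_1(B)=\pi_1(A)\cong\Z^4$; write $f_*=\pi_*\co\pi_1(U_0,u)\to\Z^4$. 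Then $\bar\tau_*$ preserves $K:=\ker f_*$ setwise and induces the identity on $\pi_1(U_0,u)/K\cong\mathrm{im}\,f_*$. Everything is thus reduced to understanding $\bar\tau_*$ on, and modulo, the subgroup $K$.

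First I would identify $K$. Writing $\mu\co X\to S/\sigma=X_\can$ for the contraction of the $(-2)$-curves $E_1,\dots,E_{4\chi}$, we have $U_0\cong X\setminus\bigcup_i E_i$, and the link of each removed $E_i$ is the lens space $L(2,1)$, with fundamental group $\Z/2$ generated by a class $n_i$; these classes generate $K$. They are nontrivial: the \'etale double cover $S\setminus\fix(\sigma)\to U_0$ induced by $\lambda$ is connected over a punctured neighbourhood of each node, so the associated $\rho\co\pi_1(U_0,u)\to\Z/2$ satisfies $\rho(n_i)=1$. Because $\bar\tau$ preserves a neighbourhood of each node and $\mathrm{Aut}(\Z/2)$ is trivial, $\bar\tau_*$ fixes each $n_i$; and since $K$ is central (the $n_i$ are linking classes of the disjoint curves $E_i$), the map $\delta\co\gamma\mapsto\bar\tau_*(\gamma)\,\gamma^{-1}$ is a homomorphism $\pi_1(U_0,u)\to K$ that kills $K$ and therefore factors through $\mathrm{im}\,f_*\subseteq\Z^4$. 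Proving the lemma is now equivalent to showing $\delta\equiv0$.

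The idea for killing $\delta$ is to represent generators of $\mathrm{im}\,f_*$ by $\bar\tau$-invariant loops. The ramification curve $R=\fix(\bar\tau)\cap U_0$ maps isomorphically onto $D\setminus D_\sing$ and is fixed pointwise by $\bar\tau$, so any loop lying in $R$ has $\delta=0$. As $D$ is ample, $\pi_1(D)\to\pi_1(A)$ is surjective, so loops on $D$ generate $\Z^4$; a loop running inside a single elliptic component can be pushed off the finitely many nodes on it and then lifted into $R$ with vanishing defect. The remaining generators, which necessarily mix distinct components of $D$, must cross the node regions, and a local computation shows that each such crossing contributes a class $n_i$ to $\delta$.

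The hard part will be showing that these node contributions cancel. Here the \emph{evenness} of the collection $\{E_i\}$ should be decisive: from $\sum_i E_i\sim 2L$ one gets $L\cdot E_i=-1$ for every $i$, and more generally the relations $\sum_i(\alpha\cdot E_i)\,n_i=0$ in $K$ for $\alpha\in H^2(X,\Z)$, which I expect to force every crossing-sum produced by $\delta$ to vanish. Equivalently, the defect ought to be measured by the very $\Z/2$-datum $\rho$ that defines the cover $S\to S/\sigma$, and the fact that the basepoint $u$ lies on the fixed curve $R$ should trivialise it. Making this cancellation precise — relating the combinatorial node-crossing count of a lifted loop to intersection numbers against the $E_i$ and then invoking the even relation — is the step I expect to demand the most care.
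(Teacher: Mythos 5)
Your reduction is in the right spirit (the paper, too, anchors everything on comparing $\pi_1(U_0)$ with $\pi_1(A)$), but two of its load-bearing steps are broken or missing. The first is the centrality of $K$. Meridian classes $n_i$ are only well defined up to conjugacy, and being ``linking classes of disjoint curves'' makes them commute in \emph{homology}, not in $\pi_1$; in fact centrality fails precisely in the surfaces this lemma is about. Via the \'etale cover $S\setminus\fix(\sigma)\to U_0$ one has an extension $1\to\pi_1(S)\to\pi_1(U_0)\to\Z/2\to1$, and conjugation by $n_i$ (which maps to the generator of $\Z/2$) restricts on $\pi_1(S)$ to an automorphism in the outer class of $\sigma_*$. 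The surfaces that actually satisfy the standing hypotheses are isogenous to a product (Theorem~\ref{thm: main}, with existing examples in \cite{CLZ13}); these are closed aspherical with centerless fundamental group, so by Borel's theorem the nontrivial automorphism $\sigma$ has nontrivial image in $\mathrm{Out}(\pi_1(S))$, hence conjugation by $n_i$ is nontrivial and $n_i$ is \emph{not} central. Without centrality your $\delta$ is only a crossed homomorphism, $\delta(\gamma_1\gamma_2)=\delta(\gamma_1)\cdot\gamma_1\delta(\gamma_2)\gamma_1^{-1}$; it need not kill $K$ (you would need $\bar\tau_*$ to fix every \emph{conjugate} of every $n_i$ --- note that the basepoint-path issue means your $\mathrm{Aut}(\Z/2)$ argument only gives $\bar\tau_*(n_i)$ conjugate to $n_i$ --- which is essentially the lemma itself), and it does not factor through $\Z^4$. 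Relatedly, ``these classes generate $K$'' conflates generation with normal generation, and even the latter silently requires Nori's lemma $\pi_1(S/\sigma)\cong\pi_1(A)$ (cited in the paper from \cite{LZ15}), which you never invoke; a priori $K$ is larger.

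The second problem is that the decisive cancellation step --- that node-crossing contributions to $\delta$ cancel via evenness of $\{E_i\}$ --- is not carried out, as you yourself acknowledge; and the relations $\sum_i(\alpha\cdot E_i)\,n_i=0$ you hope to exploit are homological statements, whereas the cancellation would have to take place in the a priori nonabelian kernel $K$, so there is no reason they apply. For contrast, the paper's proof sidesteps every one of these issues: it rebuilds $S/\sigma$ by successively gluing simply connected cone neighborhoods $U_i$ of the nodes onto $U_0$, so van Kampen gives exact sequences $\pi_1(U_i\setminus\{q_i\})\to\pi_1(X_{i-1})\to\pi_1(X_i)\to1$ compatible with $\bar\tau_*$; the base of a reverse induction is Nori's lemma ($\bar\tau_*$ is trivial on $\pi_1(S/\sigma)\cong\pi_1(A)$ because $\bar\tau$ is a deck transformation over $A$), and the only local input is that $\mathrm{Aut}(\Z/2)$ is trivial. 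That argument never needs generators of the kernel, centrality, or any cancellation mechanism --- exactly the three things your plan cannot supply.
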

\begin{proof}
Let $p_1,\dots,p_{4\chi} \in S$ be the fixed points of $\sigma$, where $\chi=\chi(\mathcal O_S)$. Then their images $q_i$ in $S/\sigma$ are exactly the singular points of $S/\sigma$, so we have $U_0=(S/\sigma) \setminus\{q_1,\dots,q_{4\chi}\}$. The images $a_S(p_i)$ in the Albanese surface $A$ are exactly the nodes of the branch curve $D$.

For $1\leq i\leq 4\chi$ we take a $\bar\tau$-invariant open neighborhoods $U_i\subset S/\sigma$ of $q_i$ in the Euclidean topology such that $U_i$ is analytically isomorphic to $(x^2=yz)\subset\B^3$, where $\B^3$ is the unit ball in $\C^3$, and the action of $\bar\tau$ is given by $(x,y,z)\mapsto(-x,y,z)$. Then $U_i$ is simply connected due to the conic structure at the singularities (cf.~\cite[page 23]{D92}). Moreover, one can assume that the $U_i$'s are so small that they are pairwise disjoint.

We view $S/\sigma$ as the topological space obtained by patching the small neighborhoods $U_i$'s to $U_0$. More precisely, set $X_0 = U_0$ and define $X_i=X_{i-1}\cup U_i$ inductively for $1\leq i\leq 4\chi$. Then it is clear that $S/\sigma = X_{4\chi}$. For each $1\leq i\leq 4\chi$ let $u_i\in a_{S/\sigma}^{-1}(D)\cap(U_i\setminus\{q_i\})$ be a $\bar\tau$-invariant point. Then there is a exact sequence of fundamental groups by van Kampen's theorem
\begin{equation}\label{eq: pi1}
 \pi_1(U_i\setminus\{q_i\}, u_i) \rightarrow \pi_1(X_{i-1}, u_i) \rightarrow \pi_1(X_i, u_i) \rightarrow 1
\end{equation}
which is preserved by induced involution $\bar\tau_*$ on the fundamental groups.

We prove by reversed induction on $i$ that $\bar\tau_*\colon \pi_1(X_{i-1})\rightarrow\pi_1(X_{i-1})$ is the identity map. Here we omit the base points for the fundamental groups to simplify the notation because, for the statement to hold, the base points are irrelevant. By a result of Nori (\cite[Corollary~2.7]{Nor83}) there is an isomorphism 
\[
\pi_1(X_{4\chi})=\pi_1(S/\sigma)\cong \pi_1(A),
\]
Therefore, as the base step of induction, $\bar\tau_*$ acts trivially on $\pi_1(X_{4\chi})$.  Concerning the left end of \eqref{eq: pi1} we have
\begin{equation}
 \pi_1(U_i\setminus\{q_i\}) \cong \Z/2\Z,
\end{equation}
which automorphism group is trivial. In particular, $\bar\tau_*$ acts as identity on it.  It follows that, if $\bar\tau_*$ is the identity on $\pi_1(X_i)$, so is it  on  $\pi_1(X_{i-1})$. This finishes the induction step and we conclude that $\bar\tau_*$ is the identity on $\pi_1(U_0)$.
\end{proof}

\begin{prop}\label{prop: bidouble}
Let $S$ be a minimal smooth projective surface of general type with $q(S)=2$. Assume that $\aut_0(S)$ is nontrivial. Then the Albanese map $a_S\colon S\rightarrow A$ is a bidouble cover.
\end{prop}
\begin{proof}
As before, let $\sigma$ be the generating involution of $\aut_0(S)$. We retain the notation in Lemma~\ref{lem: pi1}. Let $S_0\subset S$ be the inverse image of $U_0$ under the quotient map $\lambda\colon S\rightarrow S/\sigma$. Then the map $\lambda\restr{S_0}\colon S_0\rightarrow U_0$ is an \'etale double cover.  By Lemma~\ref{lem: pi1} the induced automorphism $\bar\tau_*$ of $\pi_1(U_0,u)$ is the identity, where $u\in U_0$ is a $\bar\tau$-fixed point. In particular, the subgroup $(\lambda\restr{S_0})_*\pi_1(S_0,s)$ is invariant by $\bar\tau_*$, where $s\in S_0$ is chosen to be over $u$. As is known from general topology there is an automorphism $\tau_0$ of $S_0$ such that the following diagram commutes
 \[
\xymatrix{
S_0 \ar[r]^{\tau_0} \ar[d]& S_0 \ar[d]\\
U_0 \ar[r]^{\bar\tau\restr{U_0}} & U_0.
}
\]
By the Riemann extension theorem $\tau_0$ extends to an automorphism $\tau$ of $S$, which is necessarily a lifting of $\bar\tau$.

The group generated by $\sigma$ and $\tau$ sits in an extension of  an order 2 group by the other:
 \[
1\rightarrow \langle\sigma\rangle\rightarrow \langle\sigma,\tau\rangle\rightarrow \langle\bar\tau\rangle\rightarrow 1,
\]
hence is an abelian group of order 4.

Now we have a factorization of the Albanese map of $S$:
\[
a_S\colon S\rightarrow S/\langle\sigma,\tau\rangle \rightarrow A.
 \]
Since $\deg(a_S) =|\langle\sigma,\tau\rangle|= 4$, the finite morphism between normal surfaces $S/\langle\sigma,\tau\rangle \rightarrow A$ is birational, hence is an isomorpism.

We  claim that $\langle\sigma,\tau\rangle\cong(\Z/2\Z)^2$, hence the finite morphism $a_S\colon S\rightarrow A$ is a bidouble cover.  Otherwise, $\langle\sigma,\tau\rangle$ is isomorphic to $\Z/4\Z$. The two automorphisms $\tau$ and $\tau\circ\sigma$ must be of order 4 and we have $\sigma=\tau^2=(\sigma\circ\tau)^2$. Hence the fixed point sets $\fix(\tau)$ and $\fix(\tau\circ\sigma)$ are both contained in $\fix(\sigma)$. But the latter consists only of isolated points by Theorem~\ref{thm: q geq 2}, contradicting the fact that $a_S\colon S\rightarrow A$ has a non-empty branch curve $D$. \end{proof}

\section{Proofs of the main results}
\begin{proof}[Proof of Theorem~\ref{thm: main}]
Let $\sigma_0=\sigma,\, \sigma_1,\,\sigma_2$ be the three nontrivial elements of the Galois group of the bidouble cover $a_S\colon S\rightarrow A$. Note that $\sigma_0$ does not fix any curve. For $i=1,2$ let $D_i$ be the branch curve, the stabilizer over which is generated by $\sigma_i$. Since $S$ is smooth, the branch curves $D_1$ and $D_2$ are smooth (cf.~\cite{Cat84, Cat99}). So they are both disjoint union of smooth elliptic curves (cf.~Proposition~\ref{prop: severi}).

Now it is easy to see that $D_i$ consists of fibres of some smooth elliptic fibration $h_i\colon A\rightarrow E_i$. Composing $h_1$ with the Albanese map $a_S\colon S\rightarrow A$ we get a fibration $f\colon S\rightarrow E_1$. (The fibration $h_2\circ a_S\colon S\rightarrow E_2$ also works.) One sees that the singular fibres of $f$ are over $D_1$ and they are of the form $2C$ with $C$ smooth. With such a fibration structure and with the numerical equality $K_S^2=8\chi(\mathcal O_S)$ (see Theorem~\ref{thm: q geq 2}) the surface $S$ must be isogenous to a product, of unmixed type (\cite[Lemma~5]{Ser95}).
\end{proof}

\begin{rmk}
Surfaces isogenous to a product with $q=2$ and with nontrivial $\aut_0(S)$ have been classified in \cite[Theorem~4.9]{CLZ13}.
\end{rmk}

\begin{proof}[Proof of Corollary~\ref{cor: rigid}]
Since automorphisms homotopic to the identity are numerically trivial, the assertion of the corollary is clear if $\aut_0(S)$ is trivial. 

Now assume that $\aut_0(S)$ is not trivial and $\id_S\neq \sigma\in\aut_0(S)$. By the Lefschetz fixed point theorem, $e(\fix(\sigma))=e(S)>0$. In particular, $\fix(\sigma)$ is not empty. Let $\rho\colon S\rightarrow S_{\min}$ be the contraction to its smooth minimal model. Then $\rho\circ\sigma$ and $\rho$ are distinct morphisms and coincide on $\fix(\sigma)$.

By the uniqueness of the minimal model, the automorphism $\sigma$ descends to a nontrivial automorphism $\sigma_{\min}$ of $S_{\min}$ such that $\rho\circ\sigma=\sigma_{\min}\circ\rho$. The fact that $\sigma$ is numerically trivial implies that $\sigma_{\min}$ is also numerically trivial. By Theorem~\ref{thm: main}, the surface $S_{\min}$ is isogenous to a product of curves. In particular, one sees that $S_{\min}$ has a K\"ahler metric with nonpositive sectional curvature. Since holomorphic maps between K\"ahler manifolds are harmonic with respect to the given K\"ahler metrics, we infer that $\rho\circ\sigma$ is not homotopic to $\rho$ by Theorem~\ref{thm: harmonic}, taking into account that the two maps coincide on the non-empty set $\fix(\sigma)$. It follows that $\sigma$ is not homotopic to $\id_S$.
\end{proof}

\begin{rmk}
The universal cover of a surface of general type isogenous to a product is a bidisk, which is a bounded domain. One could have applied the uniqueness result of Borel and Narasimhan \cite[Theorem~3.6]{BN67} to give another proof of Corollary~\ref{cor: rigid},  provided that automorphisms homotopic to the identity induce the trivial action on the fundamental group with a given base point. This is however not clear. In general, a base point preserving homeomorphism homotopic to the identity induces only an inner automorphism of the fundamental group. Due to this observation, the proof of \cite[Proposition~4.8]{CLZ13} is incomplete, but its statement is still valid by the argument for Corollary~\ref{cor: rigid} given here. Likewise, in the first paragraph of \cite{BN67}, "continuously homotopic" seems not enough -- the homotopy should preserve a base point.
\end{rmk}

\end{document}